\documentclass[12pt,reqno]{amsart}
\usepackage{amsfonts}
\usepackage{amsthm}
\usepackage{amsmath}
\usepackage{amscd}
\numberwithin{equation}{section}
\usepackage{hyperref}
\hypersetup{nesting=true,debug=true,naturalnames=true}
\usepackage{graphicx,amssymb,upref}
\usepackage{enumerate}
\usepackage[all]{xy}
\usepackage{color}
\usepackage{mathrsfs}
\usepackage{caption}
\usepackage{paralist}
\usepackage{color}
\usepackage{float}
\usepackage{tikz,pgf}
\usetikzlibrary{calc}
\usetikzlibrary{intersections,decorations.markings}
\usepackage{tikz-cd}
\usepackage{lineno}

\theoremstyle{definition}
\newtheorem{theorem}{\bf Theorem}[section]

\newtheorem{lemma}[theorem]{\bf Lemma}

\theoremstyle{definition}

{
\newtheorem*{Ack}{\bf Acknowledgements}
}
\newcommand{\mm}[1]{\mathrm{#1}}
\newcommand{\mb}[1]{\mathbb{#1}}

\makeatletter
\@namedef{subjclassname@2020}{\textup{2020} Mathematics Subject Classification}
\makeatother

\begin{document}

\title[{\fontsize{7}{7}\selectfont  Loop Space Splittings of Sphere Bundles}]{Loop Space Splittings of Sphere Bundles over Highly Connected Poincar\'e Complexes}

\author[{\fontsize{7}{7}\selectfont Wen Shen}]{Wen Shen}
\keywords{Sphere bundle, Highly connected complex, Loop decomposition}
\subjclass[2020]{Primary 55P15, 55P35}

\address{College of Mathematics and Physics, Wenzhou University, Wenzhou, P.R.China}
\email{shenwen121212@163.com}

	\begin{abstract}
Let $m > n \ge 2$, and let $N$ be an $(n-1)$-connected $2n$-Poincar\'e complex. In this paper, we establish sufficient conditions under which the loop space of the total space $M$ of the sphere bundle $S^{m-1} \to M \to N$ (associated to a rank-$m$ real vector bundle over $N$) splits as a product of the loop spaces of  $N$ and $S^{m-1}$.

\end{abstract}

\maketitle

\section{Introduction}

In the realm of algebraic and geometric topology, manifold classification stands as one of the most fundamental and far-reaching central problems. 
To classify manifolds, we first pick a suitable equivalence relation which describes a unique topological or smooth structure. The most widely studied among these relations are diffeomorphism, homeomorphism, and homotopy equivalence.
 Beyond these standard frameworks, there exists a more specialized and increasingly prominent classification paradigm: loop decomposition equivalence. Formally, two manifolds \(X\) and \(Y\) are deemed equivalent under this relation if their loop spaces satisfy \(\Omega X\simeq \Omega Y\), where \(\Omega\) denotes the loop functor that maps a topological space \(X\) to the space of all based loops in \(X\), and a continuous map \(f:X\to Y\) to the induced map on loops. The study of loop space equivalences not only reveals hidden homological and homotopical connections between manifolds but also provides a powerful lens for computing homotopy invariants that are otherwise intractable.

A cornerstone of manifold classification lies in the study of highly connected manifolds, a class of spaces with vanishing homotopy groups in low dimensions. This line of inquiry has a long and illustrious history, with foundational contributions from C.T.C. Wall \cite{Wall1962,Wall1967}.
Up to loop decomposition, Beben and Theriault \cite{BeTh} proved that \((n-1)\)-connected \(2n\)-manifolds are equivalent if and only if their $n$-th Betti numbers are equal. Parallel to this work, the loop decomposition problem for the odd-dimensional counterpart, namely \((n-1)\)-connected \((2n+1)\)-manifolds, has been tackled by two independent teams: Beben and Wu \cite{BW}, and Huang and Theriault \cite{HT2022}. A key virtue of these loop decomposition theorems is their computational utility: they reduce the problem of computing the homotopy groups of highly connected manifolds to the computation of homotopy groups of spheres and Moore spaces. From a different perspective, Samik Basu and Somnath Basu \cite{Basu,BasuBasu} have presented related computations.

 Sphere bundles occupy a central role in topology, serving as a bridge between manifold theory, vector bundle theory, and homotopy theory. The homotopy classification of sphere bundles has a rich heritage dating back to the work of James and Whitehead \cite{JW1,JW2}. 
In recent years, the study of loop decompositions has been extended to sphere bundles over \(4\)-manifolds \cite{Huang23,Huang25}.
A striking result from \cite{Huang25} asserts that the loop spaces of the sphere bundles of most real vector bundles over simply connected closed \(4\)-manifolds split into a product of simpler homotopy types. This breakthrough not only generalizes earlier loop decomposition results for manifolds but also raises compelling questions about the extent to which such splittings hold for sphere bundles over more general spaces.
Inspired by this work on sphere bundles over \(4\)-manifolds, we turn attention to a natural and far-reaching generalization: sphere bundles associated to real vector bundles over highly connected manifolds. 

Now we present the main result:
\begin{theorem}\label{mainth}
Let $m> n\ge 2$.
	Let $N$ be an $(n-1)$-connected $2n$-Poincar\'e complex.  
	Let 
	$$S^{m-1}\to M\to N$$
	be the sphere bundle of a rank-$m$ real vector bundle over $N$. Then 	$$\Omega M\simeq \Omega N\times \Omega S^{m-1}$$
	 if $n,m$, and $d=\mm{rank}(H^n(N;\mb{Z}))$ satisfy one of the following conditions 	
	\begin{enumerate}
		\item $n\ge 3$, $d\ge 2$.
		\item $n=2$, $(m,d)\ne (3,0)$ and $(4,0)$.
		\item $m>2n$, $d=0$.
	\end{enumerate}
\end{theorem}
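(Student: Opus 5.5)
The plan is to split the fibration $\Omega S^{m-1}\to\Omega M\to\Omega N$ by producing a map $s\colon \Omega N\to\Omega M$ with $\Omega p\circ s$ a homotopy equivalence, where $p\colon M\to N$ is the projection. Then the standard principal-fibration argument, multiplying $s$ with the fibre inclusion via the loop multiplication, gives $\Omega M\simeq\Omega N\times\Omega S^{m-1}$. To build the splitting, write the cell structure $N\simeq \big(\bigvee_{i=1}^d S^n\big)\cup_f e^{2n}$ with $N_0=\bigvee_{i=1}^d S^n$ its $n$-skeleton. Over $N_0$ the bundle is classified by elements of $\pi_{n-1}(SO(m))$, one for each sphere. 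The sphere bundle over $N_0$ admits a section whenever the obstruction in $\pi_{n-1}(S^{m-1})$ vanishes, and this group is $0$ because $m>n$. So there is a lift $N_0\to M$ of the inclusion $N_0\hookrightarrow N$.

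For $d\ge 2$, use the Beben--Theriault decomposition of $\Omega N$, which exhibits $\Omega N$ as a retract built from $\Omega N_0$. More precisely, $\Omega N_0\to\Omega N$ has a right homotopy inverse after suitable rearrangement: there is a homotopy fibration $\Omega N\to \Omega N_0 \to \dots$, and $\Omega i\colon\Omega N_0\to\Omega N$ admits a right homotopy inverse when $d\ge 2$ (Beben--Theriault/Theriault's result on attaching the top cell). Compose this right inverse with the loop of the lift $N_0\to M$ to get $s$. Then $\Omega p\circ s$ equals $\Omega i$ composed with its right inverse, which is the identity. This handles case (1), and case (2) with $d\ge2$.

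The remaining cases need separate treatment, and this is where I expect the real work. For $d=1$ (only the case $n=2$, e.g.\ $\mathbb{C}P^2$), and for $d=0$ (where $N\simeq S^{2n}$ and $M$ is an $S^{m-1}$-bundle over $S^{2n}$), the argument becomes an analysis of the top-cell obstruction. For $d=0$, a section of $p$ exists when the obstruction in $\pi_{2n-1}(S^{m-1})$ vanishes, and this holds automatically when $m>2n$, giving case (3). For $n=2$ the needed input is that $\Omega S^{m-1}\to\Omega M$ has a left homotopy inverse, or equivalently that the boundary map $\Omega N\to S^{m-1}$ is null. I would try to get this from the structure of $\Omega\mathbb{C}P^2\simeq S^1\times\Omega S^5$ and from the clutching function in $\pi_3(SO(m))$ composed with $J$-type maps into $\pi_4(S^{m-1})$ or $\pi_3(S^{m-1})$. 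These groups vanish or the relevant maps factor suitably for $m\ge5$. The excluded pairs $(m,d)=(3,0),(4,0)$, bundles over $S^4$ with fibre $S^2$ or $S^3$ (Hopf-type examples such as $S^7\to S^4$), are exactly where this fails. The main obstacle is therefore this low-dimensional bookkeeping for $n=2$ with $d\le1$: checking that the attaching-map obstruction vanishes, or that $\Omega p$ still has a right inverse, which may require following the approach of \cite{Huang25} for $4$-manifolds.
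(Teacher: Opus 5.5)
Your arguments for Items (1) and (3) are correct. Item (3) is exactly the paper's argument, but for Item (1) you take a genuinely different and shorter route.

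\textbf{Item (1).} You apply the Beben--Theriault inertness of the top cell directly to $N$. Since $\pi_k(S^{m-1})=0$ for $k\le n-1$, the bundle has a section over $\bar N=\bigvee_{i=1}^d S^n$. Composing its loop with a right homotopy inverse of $\Omega\bar N\to\Omega N$ gives a right homotopy inverse of $\Omega p$.

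The paper never uses inertness of $N$ itself. Instead it manipulates the classifying map: the lift to $\mathrm{BO}_m\langle n\rangle$ (Lemma~\ref{liftBOn}), the Smith normal form change of basis (Lemma~\ref{transform}), Theorem~\ref{basicob}, and Lemma~\ref{factorth}. The upshot is that the bundle is pulled back along a degree-one map $q\colon N\to Q=(S^n\vee S^n)\cup e^{2n}$. The paper then applies \cite[Lemma 2.3]{BeTh} only to the rank-two complex $Q$ to split $\Omega E\simeq\Omega Q\times\Omega S^{m-1}$. Finally it transports the left homotopy inverse of $\Omega S^{m-1}\to\Omega E$ back to $\Omega M$ through the bundle map $M\to E$.

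What each route costs:
\begin{itemize}
\item The paper needs only the rank-two case of Beben--Theriault, at the price of this bundle-reduction argument.
\item You need the full Beben--Theriault statement that the top cell of $N$ is inert for every $d\ge 2$, but no bundle theory beyond the section on the $n$-skeleton.
\item Your route also avoids having to know that $Q$ is a Poincar\'e complex. The paper infers this from $q$ having degree one, which suffices for odd $n$. For even $n$, however, the form of $Q$ has determinant $\langle a_1^*\cup a_1^*,[N]\rangle\,\langle\gamma\cup\gamma,[N]\rangle-1$, which the paper's choice of $\gamma$ does not control. For instance, it is at least $3$ whenever the form of $N$ is even and definite.
\end{itemize}
Both routes also cover $n=2$, $d\ge 2$.

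\textbf{Item (2).} Here there is a real hole in your proposal. The cases $d\ge 2$ are handled by your inertness argument, and $d=0$, $m\ge 5$ falls under Item (3). What remains is $N\simeq\mathbb{C}P^2$ with every $m\ge 3$, and your sketch does not settle it. In particular, $(m,d)=(3,1)$ and $(4,1)$ are \emph{included} in the theorem. Your remark that things work for $m\ge 5$ and fail only at the excluded pairs does not cover them, since the excluded pairs have $d=0$.

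Concretely, $\Omega\mathbb{C}P^2\simeq S^1\times\Omega S^5$, and the inclusion $S^2\hookrightarrow\mathbb{C}P^2$ always lifts to $M$ when $m\ge 3$. So $\Omega p$ has a right homotopy inverse if and only if the Hopf map $S^5\to\mathbb{C}P^2$ lifts to $M$, that is, if and only if the pulled-back $S^{m-1}$-bundle over $S^5$ has a section.
\begin{itemize}
\item This is automatic for $m\ge 6$, since $\pi_4(S^{m-1})=0$.
\item It is also automatic for $m=5$, since the obstruction is the image of the torsion group $\pi_4(SO(5))$ in $\pi_4(S^4)\cong\mathbb{Z}$.
\item For $m=3,4$ one must show that an obstruction in $\pi_4(S^{m-1})\cong\mathbb{Z}/2$ vanishes, and you give no argument for that.
\end{itemize}
The paper does not prove this case either: it states that Item (2) is the main theorem of \cite{Huang25} and imports it. You should cite that result explicitly for $N\simeq\mathbb{C}P^2$ rather than rely on the sketch.
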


For $d=0$, the complex $N$ as in Theorem \ref{mainth} is homotopy equivalent to $S^{2n}$. For $d=1$, Adams'  result \cite{Adams1960} implies that $n$ must be $2,4$ or $8$. Specifically, for $n=2$, $N\simeq \mb{CP}^2$; however, for $n=4$ or $8$, $N$ is not necessarily homotopy equivalent to  $\mb{HP}^2$ or $\mb{OP}^2$ respectively. This constitutes a key obstruction to the study of the cases $(n,d)=(4,1)$ and $(n,d)=(8,1)$. 
For $d\ge 2$, $\Omega N$ has already been established in \cite[Theorem 1.4]{BeTh}. Notably, for $n\ne 2,4,8$, Adams'  result \cite{Adams1960} and Poincar\'e duality imply $d\ne 1$.

As an immediate consequence of Theorem \ref{mainth}, we obtain an isomorphism of homotopy groups:
$$\pi_\ast(M)\cong \pi_\ast(S^{m-1})\oplus \pi_\ast(N)$$

Among the results in Theorem \ref{mainth}, Item (2) coincides with the main theorem of \cite{Huang25}. For Item (3), we follow the idea of \cite{Huang25} to complete the proof. Notably, Item (1) is proven from a novel and unified perspective, which distinguishes it from the existing arguments in the literature. Moreover, the argument for Item (1) also recovers the cases for $n=2$ and $d \ge 2$ as in \cite{Huang25}.



The structure of this paper is organized as follows: In Section \ref{Sevec}, we first investigate vector bundles over highly connected complexes and their classifying maps, presenting key observations that lay the foundation for proving Theorem \ref{mainth}. Subsequently, in Section \ref{Sepro}, we provide the detailed proofs of Items (1) and (3)  of Theorem \ref{mainth}.

\section{Vector bundles over highly connected complexes}\label{Sevec}

First, we recall a classic result in homotopy theory that is fundamental to our subsequent arguments:

\begin{lemma}
	For any connected space $X$ and any integer $k\ge 2$, there exists a $(k-1)$-connected space $X\langle k\rangle$ with a fibration $p:X\langle k\rangle\to X$ that induces isomorphisms on $\pi_i$ for $i\ge k$.
\end{lemma}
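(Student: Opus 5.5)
We construct $X\langle k\rangle$ as the homotopy fibre of a Postnikov section, which is the classical Whitehead tower construction. By CW approximation we may assume $X$ is a CW complex; the weak equivalence from a CW approximation induces isomorphisms on all homotopy groups, so composing with it at the end does not affect the conclusion.

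\emph{Step 1: the Postnikov section.} We kill the homotopy of $X$ in degrees $\ge k$. We attach $(k+1)$-cells to $X$ along representatives of generators of $\pi_k(X)$. By cellular approximation, this does not change $\pi_i$ for $i<k$, and it makes $\pi_k$ vanish. We then attach $(k+2)$-cells to kill $\pi_{k+1}$ of the new space, and continue inductively. The union $P=P_{k-1}X$ is a CW complex containing $X$ as a subcomplex. The inclusion $j\colon X\to P$ induces isomorphisms on $\pi_i$ for $i<k$, and $\pi_i(P)=0$ for $i\ge k$.

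\emph{Step 2: the fibration.} We replace $j$ by a fibration in the standard way. Let
$$E=\{(x,\gamma)\in X\times P^{I} : \gamma(0)=j(x)\},$$
with the map $E\to P$ given by $(x,\gamma)\mapsto\gamma(1)$. Then $X\simeq E$ via the projection. We set
$$X\langle k\rangle=\{(x,\gamma)\in E : \gamma(1)=*\},$$
the homotopy fibre of $j$, and define $p\colon X\langle k\rangle\to X$ by $(x,\gamma)\mapsto x$. There is a fibre sequence $\Omega P\to X\langle k\rangle\xrightarrow{p} X\xrightarrow{j} P$. The map $p$ is itself a fibration: it is the pullback along $j$ of the path fibration $P^{I}_{*}\to P$ (paths ending at the basepoint, evaluated at $0$). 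The fibre of $p$ is $\Omega P$.

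\emph{Step 3: the long exact sequence.} The fibration $\Omega P\to X\langle k\rangle\to X$ gives a long exact sequence
$$\cdots\to\pi_i(\Omega P)\to\pi_i(X\langle k\rangle)\xrightarrow{p_*}\pi_i(X)\xrightarrow{\partial}\pi_{i-1}(\Omega P)\to\cdots.$$
Under $\pi_{i-1}(\Omega P)\cong\pi_i(P)$, the boundary map $\partial$ is identified with $j_*$, up to sign.

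For $i\ge k$, we have $\pi_i(\Omega P)\cong\pi_{i+1}(P)=0$ and $\pi_{i-1}(\Omega P)\cong\pi_i(P)=0$. Hence $p_*$ is an isomorphism for $i\ge k$.

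For $i<k$, we use the segment $\pi_{i+1}(X)\to\pi_{i+1}(P)\to\pi_i(X\langle k\rangle)\to\pi_i(X)\to\pi_i(P)$.
\begin{itemize}
\item For $i\le k-2$, both maps $\pi_{i+1}(X)\to\pi_{i+1}(P)$ and $\pi_i(X)\to\pi_i(P)$ are isomorphisms, so $\pi_i(X\langle k\rangle)=0$.
\item For $i=k-1$, the map $\pi_k(X)\to\pi_k(P)=0$ is trivially surjective, and $\pi_{k-1}(X)\to\pi_{k-1}(P)$ is an isomorphism, so $\pi_{k-1}(X\langle k\rangle)=0$.
\end{itemize}
Exactness in low degrees (pointed sets for $i=0,1$) still yields triviality, because $j_*$ is bijective on $\pi_0$ and $\pi_1$. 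Connectedness of $X$ also ensures $X\langle k\rangle$ is path-connected.

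\emph{Main obstacle.} The only delicate point is the bookkeeping at the bottom of the long exact sequence. For $i=0,1$ we are working with pointed sets and groups rather than abelian groups, so we must argue surjectivity and injectivity directly there. The hypothesis $k\ge2$ together with connectedness of $X$ is exactly what makes this work. Everything else is standard cellular approximation.
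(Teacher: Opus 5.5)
Your proposal is correct and is exactly the Whitehead tower construction that the paper's one-line proof invokes: you take $X\langle k\rangle$ to be the homotopy fibre of the Postnikov section $X\to P_{k-1}X$, and you write out the cell-attaching and long-exact-sequence details that the paper omits. One small slip: after replacing $X$ by a CW approximation $Z\to X$, the composite $Z\langle k\rangle\to Z\to X$ need not be a fibration, so either apply your path-space construction once more to this composite or drop the CW reduction, since attaching cells to an arbitrary space still gives a relative CW complex and the same connectivity argument applies.
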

\begin{proof}
	This lemma is a direct consequence of the Whitehead tower construction for the space $X$.
\end{proof}

Let $N$ be an $(n-1)$-connected $2n$-Ponincar\'e complex satisfying $H^n(N;\mb{Z})=\mb{Z}^d$. By the minimal cell structure \cite[Proposition 4.1]{Wall1965}, $N$ has the following homotopy type
$$N=(\vee_{i=1}^d S^n )\cup_\alpha e^{2n}$$
Let $\bar N=\vee_{i=1}^d S^n$ denote the $2n-1$-skeleton of $N$. 

Recall that a rank-$m$ real vector bundle over $N$ admits a classifying map $f:N\to \mm{BO}_m$ where $\mm{BO}_m$ is the classifying space of the orthogonal group $\mm{O}_m$.

\begin{lemma}\label{liftBOn}
	 Any map $f:N\to \mm{BO}_m$ admits a lift $$g:N\to \mm{BO}_m\langle n\rangle$$
	along the fibration $p:\mm{BO}_m\langle n\rangle\to \mm{BO}_m$.
\end{lemma}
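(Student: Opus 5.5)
Since $N$ is $(n-1)$-connected, this is a lifting problem through a Postnikov-type fibration, which I would solve with obstruction theory. The fibration $p:\mm{BO}_m\langle n\rangle\to \mm{BO}_m$ is the $(n-1)$-st stage of the Whitehead tower. Its homotopy fibre $F$ has $\pi_i(F)\cong\pi_i(\mm{BO}_m)$ for $i\le n-1$, and $\pi_i(F)=0$ for $i\ge n$ (also using $\pi_n(F)=0$ from the long exact sequence, since $p$ is an isomorphism on $\pi_n$ and $\pi_i(\mm{BO}_m\langle n\rangle)=0$ for $i<n$). So $F$ is a space with homotopy only in degrees $0,\dots,n-2$ (it is $\Omega$ of the Postnikov truncation $P_{n-1}\mm{BO}_m$). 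Equivalently, $\mm{BO}_m\langle n\rangle\to\mm{BO}_m\to P_{n-1}\mm{BO}_m$ is a fibration sequence, where $P_{n-1}$ denotes the Postnikov section with $\pi_i=0$ for $i\ge n$.

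Using this, the plan has two steps.

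\textbf{Step 1.} A map $f:N\to\mm{BO}_m$ lifts to $\mm{BO}_m\langle n\rangle$ if and only if the composite $q\circ f:N\to P_{n-1}\mm{BO}_m$ is null-homotopic, where $q:\mm{BO}_m\to P_{n-1}\mm{BO}_m$ is the truncation. This is the standard exactness of $[N,\mm{BO}_m\langle n\rangle]\to[N,\mm{BO}_m]\to[N,P_{n-1}\mm{BO}_m]$ for a fibration sequence, applied in the based setting; $N$ is a connected CW complex and all spaces are connected, so based versus free issues are harmless here. To be safe about connectivity, I would take $\mm{BO}_m$ with its component.

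\textbf{Step 2.} Show that $[N,P_{n-1}\mm{BO}_m]_*=0$. Take the minimal cell structure $N=(\vee_{i=1}^d S^n)\cup_\alpha e^{2n}$, which has only the base point in dimensions below $n$. Build a null-homotopy cell by cell. The obstructions to extending over an $k$-cell lie in $\pi_{k}(P_{n-1}\mm{BO}_m)$ for the cells present, namely $k=n$ and $k=2n$, and these groups vanish because $k\ge n$. Equivalently, since $N$ is $(n-1)$-connected and the target has $\pi_i=0$ for $i\ge n$, cellular approximation together with obstruction theory gives that every map from a CW complex with cells only in dimensions $0$ and $\ge n$ into an $(n-1)$-coconnected space is null. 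Concretely, the restriction to $\vee S^n$ is null because each $S^n\to P_{n-1}\mm{BO}_m$ represents an element of $\pi_n=0$. The extension over $e^{2n}$ then gives a map $S^{2n}\to P_{n-1}\mm{BO}_m$, which is null since $\pi_{2n}=0$.

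\textbf{Main obstacle.} I expect the only delicate point to be correctly identifying the fibre and cofibre sequence, i.e.\ confirming that $\mm{BO}_m\langle n\rangle$ is the homotopy fibre of $q$. Once that is in place, the vanishing is immediate. Alternatively, one can run classical obstruction theory directly on $p$: the obstructions to lifting lie in $H^{k+1}(N;\pi_k(F))$, and these vanish since $\pi_k(F)=0$ for $k\ge n-1$ while $H^{k+1}(N)=0$ for $1\le k+1\le n-1$. With $\pi_1(\mm{BO}_m)=\mb{Z}/2$, local coefficients are trivial because $N$ is simply connected.
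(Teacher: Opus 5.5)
Your proposal is correct and is essentially the paper's argument. The paper runs obstruction theory for lifting along $p$ over the minimal cell structure $(\vee_{i=1}^d S^n)\cup_\alpha e^{2n}$, which is exactly your closing alternative, and your Postnikov-section version has the same obstruction groups, since $F\simeq \Omega P_{n-1}\mathrm{BO}_m$ gives $\pi_k(F)\cong\pi_{k+1}(P_{n-1}\mathrm{BO}_m)$.

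One index slip to fix in your first paragraph: the long exact sequence gives $\pi_i(F)\cong\pi_{i+1}(\mathrm{BO}_m)$ for $i\le n-2$ and $\pi_{n-1}(F)=0$ (because $p_\ast$ is onto on $\pi_n$ and $\pi_{n-1}(\mathrm{BO}_m\langle n\rangle)=0$), not $\pi_i(F)\cong\pi_i(\mathrm{BO}_m)$ for $i\le n-1$. Your later statement that $F$ has homotopy only in degrees $0,\dots,n-2$ is the right one, and $\pi_{n-1}(F)=0$ is exactly what the $n$-cells require — a point the paper's ``$\pi_i(F)=0$ for $i\ge n$'' leaves implicit.
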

\begin{proof} 
	Let $F$ be the homotopy fiber of $p:\mm{BO}_m\langle n\rangle\to \mm{BO}_m$. Since $p$ induces isomorphisms on $\pi_i$ for $i\ge n$, we have $\pi_i(F)=0$ for $i\ge n$. Note the cell structure of $N$. We can use the obstruction theory \cite[p.418]{Hatcher} to prove the lemma.	   
\end{proof}

Note that the space $\mm{BO}_m\langle n\rangle$ is $(n-1)$-connected. By the Hurewicz theorem, we have  $\pi_n(\mm{BO}_m\langle n\rangle)\cong H_n(\mm{BO}_m\langle n\rangle)$. Hence, the homotopy class of any map $\iota:S^n\to \mm{BO}_m\langle n\rangle$ is uniquely determined by its induced homomorphism on integral homology. We now generalize this observation to maps defined on wedge sums of 
$n$-spheres, which is critical for simplifying the structure of our classifying maps:
\begin{lemma}\label{transform}
	Let $m>n\ge 2,$ $s>0$. For any map $r:\vee_{i=1}^s S^n\to \mm{BO}_m\langle n\rangle$, there exists a homotopy equivalence $h:\vee_{i=1}^s S_i^n\to \vee_{i=1}^s S_i^n$ such that 
	the induced homomorphism 
	$$(r\circ h)_\ast:H_n(\vee_{i=1}^s S_i^n;\mb{Z})\to H_n(\mm{BO}_m\langle n\rangle;\mb{Z})$$ 
	sends the generator $a_i$ of $H_n(S^n_i;\mb{Z})$ to zero for all $i\ge 2$.
		 Moreover, the homotopy class of $r\circ h$ is determined by the homology homomorphism.
\end{lemma}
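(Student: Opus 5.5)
We reduce everything to linear algebra over $\pi_n$. Since $\mm{BO}_m\langle n\rangle$ is $(n-1)$-connected, the Hurewicz theorem gives $\pi_n(\mm{BO}_m\langle n\rangle)\cong H_n(\mm{BO}_m\langle n\rangle;\mb{Z})$. Moreover $\pi_n(\mm{BO}_m\langle n\rangle)\cong\pi_n(\mm{BO}_m)\cong\pi_{n-1}(\mm{O}_m)$, which for $m>n$ is stable. By Bott periodicity this group is therefore one of $0$, $\mb{Z}/2$ or $\mb{Z}$, and in every case it is cyclic. Call this group $A$ with generator $x$.

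Because $[\vee_{i=1}^s S^n,Y]\cong\prod_i\pi_n(Y)$ for any space $Y$, the class of a map $\vee S^n_i\to \mm{BO}_m\langle n\rangle$ is determined by the tuple of its restrictions to the wedge summands. By Hurewicz, each restriction is in turn determined by the image of $a_i$ in $H_n$. This already proves the ``moreover'' clause: two maps inducing the same homology homomorphism are homotopic. So the homotopy class of $r\circ h$ is determined by $(r\circ h)_\ast$.

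It remains to produce $h$. We write $r_\ast(a_i)=c_i x$, with $c_i\in\mb{Z}$ (or taken mod $2$ when $A=\mb{Z}/2$). Every matrix $P\in GL_s(\mb{Z})$ is realized by a self-homotopy equivalence $h$ of $\vee S^n_i$ with $h_\ast=P$ on $H_n$. One way to see this: $\pi_n(\vee S^n)\cong\mb{Z}^s$ for $n\ge2$, so we may send the $j$-th sphere to the sum $\sum_i P_{ij}\iota_i$ formed using the co-H structure. Then $h_\ast$ is an isomorphism on homology of simply connected CW complexes, so $h$ is a homotopy equivalence by Whitehead's theorem. The composite satisfies $(r\circ h)_\ast=r_\ast\circ P$, which as a row vector is $(c_1,\dots,c_s)P$.

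The key step is the elementary fact that any row vector in $\mb{Z}^s$ can be brought to the form $(g,0,\dots,0)$ by right multiplication by a unimodular matrix, where $g=\gcd(c_i)$. This follows from the Euclidean algorithm, realized by elementary column operations. If $A$ is finite, we first lift the $c_i$ to integers and then apply the same reduction. Choosing $h$ to realize this $P$ gives $(r\circ h)_\ast(a_i)=0$ for $i\ge2$. The argument has no real obstacle; the only points needing care are that self-maps realize all of $GL_s(\mb{Z})$ and that $A$ is cyclic. In fact cyclicity is not strictly needed, since the reduction works for any quotient of $\mb{Z}$.
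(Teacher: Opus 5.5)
Your proof is correct and follows essentially the same route as the paper: you identify $H_n(\mathrm{BO}_m\langle n\rangle;\mathbb{Z})\cong\pi_n$ as cyclic via Hurewicz, stability and Bott periodicity, then kill all but one coordinate of $r_\ast$ by a unimodular change of basis (Smith normal form / Euclidean algorithm) realized by a self-equivalence of the wedge. You also spell out what the paper leaves implicit: the ``moreover'' clause, the realization of $\mathrm{GL}_s(\mathbb{Z})$ by self-maps, and the $\mathbb{Z}/2$ case via lifting to integers. One small quibble concerns your closing aside: quotients of $\mathbb{Z}$ are precisely the cyclic groups, and cyclicity (at least of the image of $r_\ast$) genuinely is needed, since e.g.\ a surjection $\mathbb{Z}^2\to\mathbb{Z}^2$ cannot be made to vanish on all but one basis vector.
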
 
\begin{proof}
	First, $\pi_n(\mm{BO}_m\langle n\rangle)\cong\pi_n(\mm{BO}_m)$.
	By $m>n\ge 2,$ $\pi_n(\mm{BO}_m)\cong \pi_n(\mm{BO})$. By Bott periodicity, $\pi_n(\mm{BO})\cong\mb{Z}$, $\mb{Z}_2$ or $0$,
	so the same holds for $\pi_n(\mm{BO}_m\langle n\rangle)$. We proceed by case analysis:
	
Case 1: $\pi_n(\mm{BO}_m\langle n\rangle)\cong 0$. Take $h$ to be the identity map and the lemma is obvious. 
	
 Case 2: $\pi_n(\mm{BO}_m\langle n\rangle)\cong \mb{Z}$. By the Hurewicz isomorphism, we have $H_n(\mm{BO}_m\langle n\rangle;\mb{Z})\cong \mb{Z}$. On the other hand, $ H_n(\vee_{i=1}^s S_i^n;\mb{Z})\cong \mb{Z}^s$ with generators $e_i$ corresponding to the $i$-th sphere $S^n_i$. The induced homomorphism $r_\ast:H_n(\vee_{i=1}^s S_i^n;\mb{Z})\to H_n(\mm{BO}_m\langle n\rangle;\mb{Z})$ is thus a group homomorphism from a free abelian group of rank $s$ to $\mb{Z}$.

By the Hurewicz isomorphism, constructing the homotopy equivalence $h:\vee_{i=1}^s S_i^n\to \vee_{i=1}^s S_i^n$ as in lemma is equivalent to finding a basis of $\mb{Z}^s$ such that $r_\ast$ vanishes on all basis elements except the first one.   

If $r_\ast$ is trivial, we also take $h$ to be the identity map. Thus the lemma is obvious. 

Now we assume that the homomorphism $r_\ast$ is non-trivial. Let $$r_\ast(e_i)=d_i\in \mb{Z}, \quad
D=\mm{gcd}(d_1,d_2,\cdots,d_s).$$  
 By the Smith normal form, there exists a  matrix $P=(p_{i,j})\in \mm{GL}_{s}(\mb{Z})$ such that $P\cdot (d_1,d_2,\cdots,d_s)^{\mm{T}}=(D,0,\cdots,0)^{\mm{T}}$.
 
Let $a_j=\Sigma^s_{i=1}p_{i,j}e_i$, $1\le j\le s$. It is easy to check that 
\begin{itemize}
	\item  $\{a_1,a_2,\cdots,a_s\}$ is a basis of $\mb{Z}^s$;
	\item $r_\ast(a_i)=0$ for $i\ge 2$.
\end{itemize}
Take $h$ to be the self-equivalence of $\vee_{i=1}^s S^n_i$ determined by $P$.
This completes the proof for this case.
	
Case 3: $\pi_n(\mm{BO}_m\langle n\rangle)\cong \mb{Z}_2$. The argument is analogous to Case 2 but simpler. We leave the details to the reader.  
\end{proof}

We now state our main structural theorem for maps from the Poincar\'e complex $N$ to $\mm{BO}_m\langle n\rangle$, which simplifies the homotopy class of such maps by collapsing redundant spheres in the wedge sum:

\begin{theorem}\label{basicob}
Let $m>n\ge 2$.	Let $N\simeq (\vee_{i=1}^d S_i^n )\cup_\alpha e^{2n}$ be a Ponincar\'e complex. 
The homotopy class of any map
$g:N\to \mm{BO}_m\langle n\rangle$ lies in the image of 
the following morphism:
$$[\widetilde N, \mm{BO}_m\langle n\rangle]\stackrel{\mm{Col}^\ast}{\to }[N,\mm{BO}_m\langle n\rangle]$$
induced by the map $\mm{Col}:N\to \widetilde N$,
where $\widetilde N$ is a complex defined via the cofibration sequence
$$\vee_{i=2}^d S^n_i\to N\stackrel{\mm{Col}}{\to} \widetilde N=S^n_1\cup e^{2n}$$
\end{theorem}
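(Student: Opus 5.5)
The plan is to use the freedom in choosing the minimal cell structure of $N$ to make the restriction of $g$ to the spheres $S^n_2,\dots,S^n_d$ null-homotopic. Then I would use the exactness of the cofibration sequence to factor $g$ through $\mm{Col}$. Throughout, $X=\mm{BO}_m\langle n\rangle$ and $\bar N=\vee_{i=1}^d S^n_i$ is the $(2n-1)$-skeleton. If $d=0$ there is nothing to prove, so assume $d\ge 1$.

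\emph{Step 1 (normalize the restriction).} Let $r=g|_{\bar N}:\bar N\to X$. By Lemma \ref{transform} there is a self homotopy equivalence $h$ of $\bar N$ such that $(r\circ h)_\ast(a_i)=0$ for $i\ge 2$. Moreover, the homotopy class of $r\circ h$ is determined by this homology homomorphism. Let $h'$ be a homotopy inverse of $h$, and set $\alpha'=h'\circ\alpha$ and $N'=\bar N\cup_{\alpha'}e^{2n}$. Since $h\circ\alpha'\simeq\alpha$, the map $h$ extends over the top cell to a map $H:N'\to N$. This $H$ is a homotopy equivalence, by the five lemma on the long exact homology sequences of the pairs $(N',\bar N)$ and $(N,\bar N)$ together with Whitehead's theorem, since everything is simply connected.

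Since the statement only concerns $N$ up to homotopy type $N\simeq(\vee S^n_i)\cup_\alpha e^{2n}$, we may replace $N$ by $N'$ and $g$ by $g\circ H$. Here the sphere labels are those of the new basis $a_1,\dots,a_d$. This is the step I expect to need the most care. One has to make clear that the theorem is about a suitably chosen minimal cell decomposition (equivalently, that $\mm{Col}$ is taken with respect to the basis provided by Lemma \ref{transform}), and check that the extension $H$ really exists and is an equivalence.

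\emph{Step 2 (the restriction to the other spheres is null).} After Step 1, consider the composite $\vee_{i=2}^d S^n_i\hookrightarrow N\xrightarrow{g}X$. It equals $r\circ h$ restricted to $\vee_{i\ge2}S^n_i$, and it is zero on $H_n$. Since $X$ is $(n-1)$-connected, we have $[\vee_{i\ge 2}S^n_i,X]\cong\prod_{i\ge2}\pi_n(X)$. By the Hurewicz isomorphism $\pi_n(X)\cong H_n(X)$, each factor is detected by the induced map on $H_n$. Hence the composite is null-homotopic.

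\emph{Step 3 (extend over the cofiber).} The inclusion $\vee_{i=2}^d S^n_i\hookrightarrow N$ is the inclusion of a subcomplex. Its cofiber is $N/\vee_{i\ge2}S^n_i=S^n_1\cup e^{2n}=\widetilde N$, where the top cell is attached by $\alpha'$ followed by the collapse $\bar N\to S^n_1$. For the cofibration sequence $\vee_{i\ge2}S^n_i\to N\xrightarrow{\mm{Col}}\widetilde N$, the sequence of pointed sets
$$[\widetilde N,X]\xrightarrow{\mm{Col}^\ast}[N,X]\to[\vee_{i\ge2}S^n_i,X]$$
is exact. Concretely, by the homotopy extension property a null-homotopy of $g|_{\vee_{i\ge2}S^n_i}$ extends to a homotopy from $g$ to a map that is constant on $\vee_{i\ge2}S^n_i$, and such a map factors through the quotient $\widetilde N$. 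By Step 2, $g$ maps to the trivial element, so $g$ lies in the image of $\mm{Col}^\ast$. This proves the theorem.
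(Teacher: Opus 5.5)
Your proposal is correct and follows the paper's proof essentially step for step: you normalize $g|_{\bar N}$ via Lemma \ref{transform}, extend the self-equivalence $h$ over the top cell to pass to the new model $(\vee_{i=1}^d S^n_i)\cup_{h^{-1}\circ\alpha}e^{2n}$, use Hurewicz to see that $g$ is null on $\vee_{i\ge 2}S^n_i$, and conclude by exactness of the Puppe sequence. Your extra care on two points the paper only asserts implicitly is welcome: why the extension $H$ is a homotopy equivalence, and the fact that the theorem is really about a suitably chosen cell decomposition (the "new model of $N$").
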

\begin{proof}
	By Lemma \ref{transform}, there exists a homotopy equivalence $$h:\vee_{i=1}^d S_i^n \to \vee_{i=1}^d S_i^n $$
	such that $g|_{\vee_{i=1}^d S_i^n }\circ h$ induces a trivial homology homomorphism on $\vee_{i=2}^d S_i^n$. By the Hurewicz theorem, $(g|_{\vee_{i=1}^d S_i^n }\circ h)|_{\vee_{i=2}^d S_i^n}$	is null-homotopic. Furthermore, $h$ can be extended to the homotopy equivalence
	$$\mm{H}:(\vee_{i=1}^d S_i^n )\cup_{h^{-1}\circ\alpha} e^{2n}\to (\vee_{i=1}^d S_i^n )\cup_{\alpha} e^{2n}$$
	satisfying $\mm{H}|_{\vee_{i=1}^d S_i^n}=h$. 
	
	We now consider the complex $(\vee_{i=1}^d S_i^n )\cup_{h^{-1}\circ\alpha} e^{2n}$ and the map $$g\circ \mm{H}: (\vee_{i=1}^d S_i^n )\cup_{h^{-1}\circ\alpha} e^{2n}\to \mm{BO}_m\langle n\rangle$$
	For this new model of $N$, the restriction of $g\circ \mm{H}$ to $\vee_{i=2}^d S^n_i$ is  null-homotopic. By applying the functor $[-,\mm{BO}_m\langle n\rangle]$ and the Puppe sequence, the null-homotopy of $g\circ \mm{H}|_{\vee_{i=2}^d S^n_i}$ induces the theorem. 
\end{proof}

\section{Proof of Theorem \ref{mainth}}\label{Sepro}


Let $N$ be an $(n-1)$-connected $2n$-Ponincar\'e complex 
$$N\simeq (\vee_{i=1}^d S^n )\cup_\alpha e^{2n}$$
\begin{lemma}\label{factorth}
Let $d\ge 2$ and $m>n\ge 2$. Any map $g:N\to \mm{BO}_m\langle n\rangle$ factors as
$$g:N\stackrel{q}{\to }Q\stackrel{\mm{g}}{\to }\mm{BO}_m\langle n\rangle$$
where $Q= (S^n\bigvee S^n )\bigcup_\beta e^{2n}$ is a Poincar\'e complex.  
\end{lemma}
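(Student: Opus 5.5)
Let $\bar N=\vee_{i=1}^d S^n_i$. The plan is to combine Lemma \ref{transform} with a symplectic (or symmetric) change of basis adapted to the intersection form of $N$. Then we collapse all but one hyperbolic pair of spheres.

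\textbf{Step 1: choose the intersection form and the classifying homomorphism.} By Poincar\'e duality, the cup product pairing $H^n(N)\times H^n(N)\to H^{2n}(N)\cong\mb{Z}$ is unimodular. This pairing is encoded by the Hopf--Whitehead invariant of $\alpha$, which lives in the part of $\pi_{2n-1}(\bar N)$ made up of Whitehead products $[\iota_i,\iota_j]$ and Hopf invariants. The homomorphism $g_\ast: H_n(\bar N)\to H_n(\mm{BO}_m\langle n\rangle)$ is an element $x\in H^n(N;\pi_n)$, where $\pi_n=\pi_n(\mm{BO}_m)\in\{0,\mb{Z},\mb{Z}_2\}$. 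Rather than the arbitrary basis change of Lemma \ref{transform}, I would look for a basis in which the form splits as an orthogonal sum $L\perp L^\perp$ with $L$ of rank $2$, unimodular, and $x$ supported on $L$. The obvious candidate is to take $u\in H_n$ Poincar\'e dual to $x$, or a primitive element with $x=D\cdot u^\ast$ as in the proof of Lemma \ref{transform}. Then let $L$ be a hyperbolic or rank-$2$ unimodular summand containing $u$. For even forms, and for odd indefinite forms of rank $\ge 2$, such a splitting should exist by standard lattice arguments (primitive vectors can be embedded in a hyperbolic plane or an orthogonal summand). The case where the form is odd definite, such as $\langle 1\rangle^{\oplus d}$, needs a separate check: there I would take $L=\langle1\rangle\oplus\langle1\rangle$ or $\langle\pm1\rangle\oplus\langle\pm1\rangle$. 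This gives a rank-$2$ summand containing the dual of $x$, because for a diagonal form $x$ can be moved with Lemma \ref{transform}-type operations within the isometry group. This is the step I expect to be the main obstacle, since the isometry group of the form is smaller than $\mm{GL}_d(\mb{Z})$.

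\textbf{Step 2: realize the basis change geometrically.} Realize the chosen basis by a self-equivalence $h$ of $\bar N$, as in Theorem \ref{basicob}, and extend it to $\mm{H}$. In the new model the attaching map decomposes as
$$h^{-1}\circ\alpha=\beta+\gamma,\qquad \beta\in\pi_{2n-1}(S^n_1\vee S^n_2),\quad \gamma\in\pi_{2n-1}(\vee_{i\ge3}S^n_i).$$
The decomposition holds because the intersection form has no cross terms between $L$ and $L^\perp$. By the Hilton--Milnor theorem, the cross-term components are exactly the Whitehead products $[\iota_i,\iota_j]$ with $i\le 2<j$, and their coefficients are these cross intersection numbers, which vanish. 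One might worry about higher Hilton--Milnor terms, but those lie in higher-weight brackets that vanish in $\pi_{2n-1}$ for dimensional reasons, since $n\ge2$.

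\textbf{Step 3: collapse and factor.} Let $q:N\to Q=(S^n_1\vee S^n_2)\cup_\beta e^{2n}$ be the map collapsing $\vee_{i\ge3}S^n_i$; it is well defined because $\gamma$ lies in the collapsed part. The complex $Q$ is Poincar\'e because its intersection form is the unimodular form on $L$. Since $g\circ\mm{H}$ is null on $\vee_{i\ge3}S^n_i$, we then use the cofibration $\vee_{i\ge 3}S^n_i\to N\to Q$ and the Puppe exact sequence, exactly as in Theorem \ref{basicob}. This gives $\mm{g}:Q\to\mm{BO}_m\langle n\rangle$ with $g\simeq \mm{g}\circ q$, after composing with $\mm{H}^{-1}$.
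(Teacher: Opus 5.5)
\textbf{Steps 2 and 3.} These are fine once Step 1 is granted. In a basis adapted to an orthogonal splitting, the Hilton--Milnor coefficients of the $[\iota_i,\iota_j]$ are the cup products. The next layer lies in $\pi_{2n-1}(S^{3n-2})=0$. The collapse-plus-Puppe argument then factors $g$.

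\textbf{The gap is Step 1.} A rank-$2$ unimodular orthogonal summand $L$ with $x$ supported on $L$ need not exist. Your justifications for the even, odd definite and odd indefinite cases all fail.
\begin{itemize}
\item \emph{Even definite.} Take the form $E_8$, realized by $(\vee^8S^n)\cup e^{2n}$ for any even $n\ge 4$ using $[\iota_i,\iota_i]$ and $[\iota_i,\iota_j]$. It has no rank-$2$ unimodular sublattice at all, because $\det\bigl(\begin{smallmatrix}2a&b\\ b&2c\end{smallmatrix}\bigr)=4ac-b^2$ is never $1$.
\item \emph{Odd definite.} Take $\langle1\rangle^{\oplus 3}$, e.g.\ $N=\#^3\mathbb{CP}^2$ with $n=2$, $\pi_2(\mathrm{BSO}_m)=\mathbb{Z}/2$ and $x=w_2(N)=(1,1,1)$. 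For any rank-$2$ unimodular summand, $L^\perp$ is spanned by a norm-one vector $\pm e_i$. So $L$ is a coordinate plane and misses $x$. The isometry group is the signed permutations, which preserve the number of odd coordinates, so no ``Lemma~\ref{transform}-type'' move inside it can help.
\item \emph{Odd indefinite.} Take $5\langle1\rangle\oplus\langle-1\rangle$ with $x$ characteristic. Then $x\in L\otimes\mathbb{Z}/2$ forces $L^\perp$ to be even unimodular of rank $4$, hence of signature $0$. That gives $\sigma(L)=4$, impossible in rank $2$.
\end{itemize}

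\textbf{This is not just a defect of your route.} In the $\#^3\mathbb{CP}^2$ example, no factorization through any Poincar\'e $Q=(S^2\vee S^2)\cup e^4$ exists. Since $x^2\neq 0$, $q^*$ must be an isomorphism on $H^4(-;\mathbb{Z}/2)$. So $q^*$ embeds the nondegenerate mod-$2$ form of $Q$ isometrically as a plane $v^\perp$ with $v$ of odd weight, and then $x\cdot v=1$, a contradiction. The statement therefore needs an extra hypothesis. For $n$ odd your argument does go through: the form is symplectic, and a primitive lift of $x$ lies in a hyperbolic summand.

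\textbf{Comparison with the paper.} The paper never asks for an orthogonal splitting. It takes any $\gamma$ with $\langle a_1^*\cup\gamma,[N]\rangle=\pm1$, maps $N\to Q$ so that $b_1^*\mapsto a_1^*$ and $b_2^*\mapsto\gamma$, and infers that $Q$ is Poincar\'e because $q$ is an $H_{2n}$-isomorphism. That inference only yields the cup matrix $\bigl(\begin{smallmatrix}(a_1^*)^2&\pm1\\ \pm1&\gamma^2\end{smallmatrix}\bigr)$ on $H^n(Q)$. For $n$ odd the diagonal vanishes and the matrix is unimodular. For $n$ even the determinant is $(a_1^*)^2\gamma^2-1$. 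In the example this is at least $2$, since $a_1^*\equiv x$ has three odd coordinates. Your diagnosis of where the difficulty lies is right, but neither your argument nor the paper's establishes the lemma as stated when $n$ is even.
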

\begin{proof}
	By Lemma \ref{transform} and the cofibration $\bar N\to N$, we may assume without loss of generality that the restriction $$\bar g=g|_{\bar N}:\bar N=\vee_{i=1}^d S^n_i \to \mm{BO}_m\langle n\rangle$$ satisfies 
	$\bar g(a_i)=0$ for all $i\ge 2$ where $a_i\in H_n(S^n_i)\subset H_n(N)$ denotes the canonical homology generator of the $i$-th $n$-sphere.  Let $\{a_1^\ast,\cdots,a_d^\ast\}\subset H^n(N)$ be the dual basis of $\{a_1,\cdots,a_d\}\subset H_n(N)$ with respect to the Kronecker pairing $\langle a^\ast_i, a_j\rangle=\delta_{ij}$.

Let $[N]\in H_{2n}(N;\mb{Z})$ be a generator.	
	Since $d\ge 2$, we can choose $\gamma\in H^n(N;\mb{Z})$ such that $\gamma\ne a_1^\ast$ and $\langle a_1^\ast\cup \gamma, [N]\rangle=\pm 1$ through the following process: By Poincar\'e duality, there exists $e^\ast$ such that $$\langle a_1^\ast\cup e^\ast, [N]\rangle=\pm 1.$$
Then there are two cases:

Case 1: $e^\ast\ne a_1^\ast$, then let $\gamma=e^\ast$. 

Case 2: $e^\ast= a_1^\ast$, then 
$\langle a_1^\ast\cup a_1^\ast, [N]\rangle=\pm 1$.
Since $d\ge 2$, there exists a generator $\bar e^\ast\in H^n(N;\mb{Z})$ such that $\bar e^\ast\ne a_1^\ast$. If $\langle a_1^\ast\cup \bar e^\ast, [N]\rangle=\pm 1$, then let $\gamma=\bar e^\ast$.  
Otherwise, 
$$\langle a_1^\ast\cup \bar e^\ast, [N]\rangle=\pm k,\quad \text{$k\ne 1$ and $k\ge 0$.}$$
Let $\gamma=(1-k)a_1^\ast+\bar e^\ast$ or $(1+k)a_1^\ast+\bar e^\ast$. Then  
$\langle a_1^\ast\cup \gamma, [N]\rangle=\pm 1$.

By the dual basis, we may assume $\gamma=\Sigma_{i=1}^d k_ia_i^\ast$ for some  $k_i\in \mb{Z}$.
	

Next, we construct a cellular map
	$$\bar q:\vee_{i=1}^d S_i^n\to S_1^n\vee S_2^n$$ 
	The homotopy class of $\bar q$ is determined by its induced homomorphism 
	$$\bar q_\ast:H_n(\vee_{i=1}^d S_i^n;\mb{Z})\to H_n(S_1^n\vee S_2^n;\mb{Z})$$
	Let $b_i$ be the canonical generator of $H_{n}(S_1^n\vee S_2^n;\mb{Z})$.
	We define that $\bar q_\ast(a_1)=b_1+k_1b_2$, $\bar q_\ast(a_i)=k_ib_2$ for $2\le i\le d$. Let $\{b_1^\ast,b_2^\ast\}$ be the dual basis of $\{b_1,b_2\}\subset H_n(S_1^n\vee S_2^n;\mb{Z})$. Then we have
	$$\bar q^\ast(b_2^\ast)=\Sigma_{i=1}^d k_ia_i^\ast=\gamma\quad\text{and} \quad\bar q^\ast(b_1^\ast)=a_1^\ast.$$
	
	We now extend $\bar q$ to the following map 
 $$q:N=(\vee_{i=1}^d S^n_i)\cup_\alpha e^{2n}\to Q=(S_1^n\vee S_2^n)_{\bar q\circ \alpha}e^{2n}$$
Obviously, $q$ induces an isomorphism on $H_{2n}$.	
	 Thus, $Q$ is a Poincar\'e complex. Moreover, The map $q$
 fits into the following commutative diagram of cofibrations:
\[
\xymatrix@C=.9cm{
\vee_{i=2}^d S^n_i\ar[d]^{}\ar[r]^{}&N \ar[r]\ar[d]^{q}&S^n_1\cup e^{2n}\ar[d]\\
S^n_2\ar[r]^{ }&Q\ar[r]^{}& S^n_1\cup e^{2n}
}
\]
The right-hand vertical map is a homology equivalence, and hence a homotopy equivalence, by the Whitehead theorem.
  Apply the functor $[-,\mm{BO}_m\langle n\rangle]$ to the above diagram. By Theorem \ref{basicob} and the right-hand homotopy equivalence, we finish the proof. 
\end{proof}


\begin{proof}[Proof of Item (1) of Theorem \ref{mainth}]
Recall $d\ge 2$ and $m>n\ge 3$.
	By Lemma \ref{liftBOn} and \ref{factorth}, for any rank-$m$ real vector bundle $\xi$ over $N$, there exists a rank-$m$ real vector bundle $\eta$ over $Q$ such that $q^\ast\eta=\xi$. This  induces a bundle morphism
	 between their sphere bundles:
	\[
\xymatrix@C=.9cm{
S^{m-1}\ar[d]^{\mm{id}}\ar[r]^{\mathfrak j}&M\ar[r]\ar[d]^{\mathrm{q}}&N\ar[d]^-{q}\\
S^{m-1}\ar[r]^{\mathfrak i}&E\ar[r]^{\pi}&Q
}
\]

Since $m>n$, the sphere bundle projection $\pi :E\to Q$ has a cross section on the $n$-skeleton of $Q$. In other words, there exists a natural inclusion $i:S^n\vee S^n\to E$ such that the composition $$\pi\circ i:S^n\vee S^n\to S^n\vee S^n\subset Q$$ is the identity.

By the Hilton-Milnor theorem, the canonical inclusion $$j:S^n\vee S^n\to S^n\times S^n$$
has a right homotopy inverse after looping. Precisely, there is a map $$\phi :\Omega(S^n\times S^n)\to \Omega(S^n\vee S^n)$$
such that $\Omega j\circ \phi\simeq \mm{id}$.

Following \cite[Lemma 2.3]{BeTh}, we have that the composite
$$\Omega(S^n\times S^n)\stackrel{\phi}{\to }\Omega(S^n\vee S^n)\stackrel{\Omega i}{\to }\Omega E\stackrel{\Omega \pi}{\to }\Omega Q$$
is a homotopy equivalence. Hence the map $\Omega \pi$ admits a right homotopy inverse $\varphi:\Omega Q\to \Omega E$. Thus the sphere bundle
$$S^{m-1}\stackrel{\mathfrak i}{\to} E\stackrel{\pi}{\to} Q$$
splits after looping to give
$$\Omega E\simeq \Omega S^{m-1}\times \Omega Q.$$
Hence the map $\Omega \mathfrak i$ has a left homotopy inverse
$r:\Omega E\to \Omega S^{m-1}$. Then $r\circ \Omega \mm{q}:\Omega M\to \Omega S^{m-1}$ is a left homotopy inverse of the map $\Omega \mathfrak j$. Therefore, the sphere bundle 
$$S^{m-1}\stackrel{\mathfrak j}{\to} M\to N$$
 splits after looping to give
$\Omega M\simeq \Omega S^{m-1}\times \Omega N$.
\end{proof}

\begin{proof}[Proof of Item (3) of Theorem]
	Since $d=0$, $N\simeq S^{2n}$. The fibration
	$$S^{m-1}\to M\stackrel{\pi }{\to} S^{2n}$$
	induces the following exact sequence
	$$\pi_{2n}(M)\stackrel{\pi_\ast }{\to} \pi_{2n}(S^{2n})\to \pi_{2n-1}(S^{m-1})\to \cdots$$
	When $2n<m$, $\pi_{2n-1}(S^{m-1})=0$. The surjection $\pi_\ast$ implies that $\pi$ has a right homotopy inverse. It follows the sphere
bundle splits after looping to give $\Omega M\simeq \Omega S^{2n}\times \Omega S^{m-1}$. This completes the proof. 
\end{proof}

\begin{Ack}
	The author would like to express sincere gratitude to the referee for many valuable suggestions.
\end{Ack}

\end{document}